\newtheorem{theorem}{Theorem}[section]
\newtheorem{definition}{Problem}[section]
\newtheorem{remark}{Remark}[section]
\begin{document}

\title{Uniqueness results on phaseless inverse scattering with a reference ball}

\author{
Deyue Zhang\thanks{School of Mathematics, Jilin University, Changchun,
P. R. China. {\it dyzhang@jlu.edu.cn}}\ \ and 
Yukun Guo\thanks{Department of Mathematics, Harbin Institute of Technology, Harbin, P. R. China. {\it ykguo@hit.edu.cn} (Corresponding author)}
}

\maketitle

\begin{abstract}
This paper is devoted to the uniqueness in inverse acoustic scattering problems for the Helmholtz equation with phaseless far-field data. Some novel techniques are developed to overcome the difficulty of translation invariance induced by a single incident plane wave.  In this paper, based on adding a reference ball as an extra artificial impenetrable obstacle (resp. penetrable homogeneous medium) to the inverse obstacle (resp. medium) scattering system and then using superpositions of a fixed plane wave and some point sources as the incident waves, we rigorously prove that the location and shape of the obstacle as well as its boundary condition or the refractive index can be uniquely determined by the modulus of far-field patterns. The reference ball technique in conjunction with the superposition of incident waves brings in several salient benefits. First, the framework of our theoretical analysis can be applied to both the inverse obstacle and medium scattering problems. Second, for inverse obstacle scattering, the underlying boundary condition could be of a general type and be uniquely determined. Finally, only a single frequency is needed. 
\end{abstract}

\noindent{\it Keywords}: inverse scattering, phaseless, uniqueness, Helmholtz equation, far field, reference ball


\section{Introduction}

The inverse scattering problems with the knowledge of the far-field pattern is of great importance in many areas of sciences and technology such as nondestructive testing, radar and sonar, and medical imaging (see, e.g., \cite{Colton}). However, in a variety of realistic applications, the measurements of the full data (both the intensity and phase) cannot be obtained, but instead only intensity information of the far-field pattern might be available. This leads to the study of inverse scattering problems with phaseless far-field data. This paper will be focused on the uniqueness in phaseless inverse acoustic scattering problems.

The main difficulty in showing the uniqueness roots in the translation invariance, i. e., the modulus of the far field pattern is invariant under translations \cite{Kress, Liu}. Moreover, as pointed out in \cite{Kress}, this difficulty cannot be overcome by using finitely many incident waves with different wave numbers or different incident directions. Under the condition that the scatterer is a small sound-soft ball, Liu and Zhang \cite{LZ09} proved that it can be uniquely determined by the modulus of the far-field datum measured at a fixed observation corresponding to a single incident plane wave. For the shape reconstruction of impenetrable obstacles, a number of numerical methods have been proposed in the literature, see, e.g., the Newton method \cite{Kress}, the nonlinear integral equation method \cite{Ivanyshyn1, Ivanyshyn2, Ivanyshyn3}, the
fundamental solution method \cite{KarageorghisAPNUM}, the hybrid method \cite{Lee2016} and the reverse time migration method \cite{CH17}. We also refer to
\cite{BLL2013, Bao2016, Li1, LLW17, KR16, Shin} for the reconstruction of the shape of a polyhedral obstacle, a convex sound-soft scatterer, a periodic grating profile and multi-scale sound-soft rough surfaces from phaseless far-field or near-field data.

There have been some investigations in the literature to break the translation invariance and determine the location of obstacles. In \cite{ZhangBo20171}, the superpositions of two plane waves with different incident directions were used as the incident fields and a recursive Newton-type iteration method in frequencies was proposed. Further, they developed this method to recover locally rough surfaces with phaseless far-field data in \cite{ZhangBo20172}. Recently, based on the method of superposition of two incident plane waves, they have proved in \cite{ZhangBo20173} that the obstacle and the refractive index of an inhomogeneous medium can be uniquely determined by the phaseless far-field patterns under the condition that the obstacle is sound-soft or non-absorbing impedance and the refractive index is real-valued with a positive lower bound or a negative upper bound. For inverse medium scattering problems, the translation invariance relation also holds, and we refer to \cite{Kli14, Kli17, KR17} for the relevant uniqueness results.

In this paper, some novel techniques are developed to get rid of the translation invariance in phaseless inverse scattering so that the uniqueness is obtainable. The rationale behind our methodology consists of two main ingredients: the incorporation of a reference ball and superposition of different types of incident waves. Motivated by the calibrating ball technique in \cite{Colton1, Colton2} and the reference ball technique in \cite{Li} for the linear sampling method and the method of superposition of two plane waves as incident waves  \cite{ZhangBo20171,  ZhangBo20173}, we first add a sound-soft ball as an extra artificial obstacle to the inverse obstacle scattering system. Then, the superposition of a fixed plane wave and some point sources is utilized as the incident wave, we prove that the obstacle with the boundary condition and the refractive index can be uniquely determined by the modulus of far-field patterns. The reference ball technique, with the aid of superposition of incident waves, brings in several benefits: it can deal with a general boundary condition; it is not required to know the boundary condition, which can be also uniquely determined as the obstacle by the phaseless far-field patterns. Finally, we would like to remark that the reference ball techniques in the literature are mainly used for the numerical purpose of improving the quality of reconstructions, and to our best knowledge, this is the first attempt to introduce this technique to the theoretical justifications of the uniqueness in phaseless inverse scattering.

The rest of this paper is organized as follows. In the next section, we present an introduction to the model problem and the reference ball technique. Section \ref{sec:obstacle} is devoted to the uniqueness results on phaseless inverse obstacle scattering. In Section \ref{sec:medium}, we present the uniqueness on phaseless inverse medium scattering. Some concluding remarks are given in Section \ref{sec:conclusion}.

\section{Problem setting}\label{sec:problem_setup}

We begin this section with the precise formulations of the model acoustic scattering problems. Let $D \subset\mathbb{R}^3$ be an open and simply connected domain with $C^2$ boundary $\partial D$. Denote by $\nu$ be the unit outward normal to $\partial D$ and by $\mathbb{S}^2:=\{x\in\mathbb{R}^3: |x|=1\}$ the unit sphere in $\mathbb{R}^3$. Consider the scattering of a given incoming wave $u^i(x,d)=\mathrm{e}^{\mathrm{i} k x\cdot d}$ by the scatterer $D$, where $d\in\mathbb{S}^2$ and $k>0$ are the incident direction and wave number, respectively. The obstacle scattering problem is to find the total field $u=u^i+u^s$ that satisfies the exterior boundary value problem (see \cite{Colton}):
\begin{eqnarray}
\Delta u+ k^2 u= 0\quad \mathrm{in}\ \mathbb{R}^3\backslash\overline{D},\label{eq:Helmholtz} \\
\mathscr{B}u= 0 \quad \mathrm{on}\ \partial D, \label{eq:boundary_condition} \\
\lim\limits_{r=|x|\rightarrow\infty} r\bigg(\displaystyle\frac{\partial u^s}{\partial r} -\mathrm{i} ku^s\bigg)=0, \label{eq:Sommerfeld}
\end{eqnarray}
where $u^s$ is the scattered field and the Sommerfeld radiation condition \eqref{eq:Sommerfeld} holds uniformly with respect to all directions $\hat{x}=x/|x|\in\mathbb{S}^2$. The boundary operator $\mathscr{B}$ in \eqref{eq:boundary_condition} signifies the following mixed boundary condition
\begin{eqnarray}\label{BC}
\mathscr{B}u=
\left\{
\begin{array}{lr}
u & \mathrm{on}\ \Gamma_D, \\
\displaystyle\frac{\partial u}{\partial \nu}+ \lambda u, & \mathrm{on}\ \Gamma_I,
\end{array}
\right.
\end{eqnarray}
where $\Gamma_D\cup\Gamma_I=\partial D, \Gamma_D\cap\Gamma_I=\emptyset, \lambda\in C(\Gamma_I)$ and $\mathrm{Im}\lambda \geq 0$. The mixed boundary condition \eqref{BC} is rather general in the sense that it covers the usual Dirichlet/sound-soft boundary condition ($\Gamma_I=\emptyset$), the Neumann/sound-hard boundary condition ($\Gamma_D=\emptyset$ and $\lambda=0$), and the impedance boundary condition ($\Gamma_D=\emptyset$ and $\lambda\neq 0$).

The medium scattering problem is to find the total field $u=u^i+u^s$ that satisfies
\begin{eqnarray}
\Delta u+ k^2n_D u=  0 \quad \text{in}\ \mathbb{R}^3, \label{eq:Helmholtz_D}\\
\lim\limits_{r=|x|\rightarrow\infty} r\bigg(\displaystyle\frac{\partial u^s}{\partial r}-\mathrm{i} ku^s\bigg)=0, \label{eq:Sommerfeld2}
\end{eqnarray}
where the refractive index $n_D(x)$ of the inhomogeneous medium is piecewise continuous with $\mathrm{Re}(n_D)>0$, $\mathrm{Im}(n_D)\geq0$ and $n_D(x)\equiv1$ for $x\notin D$.

\begin{figure}
	\centering
	\begin{tikzpicture}
	\pgfmathsetseed{8}
	\draw plot [smooth cycle, samples=6, domain={1:8}] (\x*360/8+5*rnd:0.5cm+1cm*rnd) node at (0,0) {$D$};
	\draw [->] (-1.6,1.4)--(-1,1) node at (-1,1.5) {$u^i$};
	\draw [->] (1,1)--(1.5, 1.5) node at (1,1.5) {$u^s$};
	\draw [dashed] (0, 0) circle (3cm) node at (3.2, 2.5) {$u^\infty(|x|\to \infty)$};
	\end{tikzpicture}
	\caption{An illustration of the model scattering problem.} \label{fig:model_problem}
\end{figure}
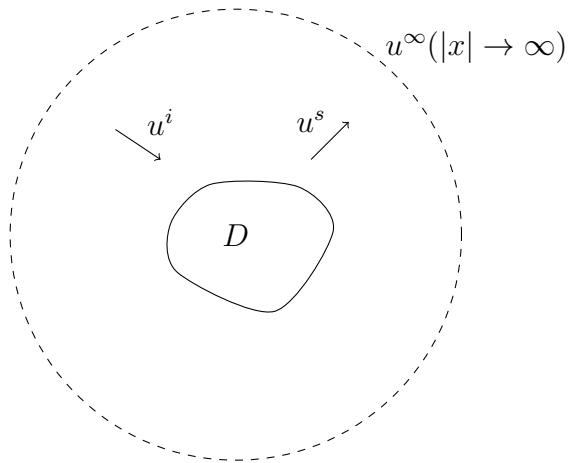

The direct scattering problems \eqref{eq:Helmholtz}--\eqref{eq:Sommerfeld} and \eqref{eq:Helmholtz_D}--\eqref{eq:Sommerfeld2} have a unique solution
$u\in H^1_{loc}(\mathbb{R}^3\backslash\overline{D})$ (see \cite{Cakoni, McLean}) and $u\in H^2_{loc}(\mathbb{R}^3)$ (see \cite{Colton}), respectively, and the scattered wave $u^s$ admits the
asymptotic behavior of the form
\begin{equation*}
u^s(x)=\frac{\mathrm{e}^{\mathrm{i} k|x|}}{|x|}\left\{ u^{\infty}(\hat{x})+\mathcal{O}\left(\frac{1}{|x|}\right) \right\}, \quad |x|\rightarrow\infty
\end{equation*}
uniformly in all directions $\hat{x}=x/|x|\in\mathbb{S}^2$. The analytic function $u^{\infty}(\hat{x})$ defined on the unit sphere $\mathbb{S}^2$ is called the far field pattern or scattering amplitude (see \cite{Colton}). For an illustration of the model scattering problem, we refer to Figure \ref{fig:model_problem}. In what follows, we shall also employ $u^{\infty}_D(\hat{x}, d; k)$ to indicate the dependence of the far field pattern $u^\infty(\hat{x})$ on the observation direction $\hat{x}$, the incident direction $d$, the obstacle or medium $D$, and the wave number $k$. Then, the phaseless inverse scattering problem is stated as follows.
\begin{definition}
	Let $D$ be either an impenetrable obstacle or a penetrable inhomogeneous medium. Given phaseless far field data
	$|u^{\infty}_D(\hat{x}, d; k)|$ for $\hat{x}, d\in \mathbb{S}^2$ and a fixed $k>0$, determine the location and shape $\partial D$ as well as the boundary condition $\mathscr{B}$ for the obstacle or the refractive index $n_D$ for the medium inclusion.
\end{definition}

There is a main difficulty in solving this problem: non-uniqueness, i.e., the location of the scatterer cannot be uniquely determined from the intensity-only far-field data. More specifically, for the shifted domain $D^h := \{x+h:  x \in D\}$ with a fixed vector $h\in \mathbb{R}^3$, the far field pattern $u^{\infty}_{D^h}(\hat{x}, d)$ satisfies the relation
\begin{equation}\label{eq:translation}
u^{\infty}_{D^{h}}(\hat{x}, d)=\mathrm{e}^{\mathrm{i} kh\cdot(d-\hat{x})}u^{\infty}_D(\hat{x}, d), \quad \hat{x}\in \mathbb{S}^2.
\end{equation}

As we mentioned, this ambiguity cannot be remedied by using finitely many incident waves with different wave numbers or different incident directions. In the following, we aim to tackle the translation invariance by adding a reference ball and utilizing the superposition of incident waves. To this end, we first introduce a reference ball $B$ as an extra artificial object to the scattering system such that $\overline{D}\cap\overline{B}=\emptyset$. For the obstacle scattering problem, the reference ball is sound-soft, while for the medium scattering problem, the reference ball is a homogeneous medium
$B=B(x_0,R):=\{x\in \mathbb{R}^3: |x-x_0|<R\}$ with refractive index $n_0^2$ such that
\begin{equation}\label{assume}
R<\frac{\pi}{2k(n_0+1)},
\end{equation}
where $n_0\neq1$ is a positive constant. The refractive index of the medium scattering system is defined by
\begin{equation*}
n(x):=\left\{
\begin{array}{ll}
n_D(x),  & x\in D, \\
n_0^2,  & x\in B, \\
1,      & x\in \mathbb{R}^3\backslash (\overline{D}\cup \overline{B}).
\end{array}
\right.
\end{equation*}

Assume that $P$ is a simply-connected convex polyhedron such that $P \subset\mathbb{R}^3\backslash (\overline{D}\cup \overline{B})$ and $k^2$ is not a Dirichlet eigenvalue of $-\Delta$ in $P$. We remark that this configuration of $P$ can be achieved, for example, in the case that $P$ is contained in a ball whose radius is less than $\pi/k$.  The boundary of $P$ is composed of finitely many 2D polygons $\Pi_\ell, \ell=1,\cdots, N$, namely, $\Pi:=\partial P=\cup_{\ell=1}^N \Pi_\ell$.
Then let us consider the superposition of a plane wave $u^i$ and a point source $v^i$ as the incident wave:
\begin{equation}\label{incident}
u^i(x, d)+v^i(x, z)=\mathrm{e}^{\mathrm{i} k x\cdot d}+\Phi(x, z),
\end{equation}
where $z \in \Pi$ denotes the point source location, and
\begin{equation*}
\Phi (x,z)=\frac{\mathrm{e}^{\mathrm{i} k|x-z|}}{4\pi |x-z|}
\end{equation*}
is the fundamental solution to the Helmholtz equation.  

Let the pairs $\{u^s_{D\cup B}(x,d), u^{\infty}_{D\cup B}(\hat{x},d)\}$ and $\{v^s_{D\cup B}(x,z), v^{\infty}_{D\cup B}(\hat{x},z)\}$ be the scattered field and its far-field pattern generated by $D\cup B$ corresponding to the incident field $u^i$ and $v^i$, respectively. Then, by the linearity of direct scattering problem, the scattered field and the far-field pattern generated by $D\cup B$ and the incident wave $u^i+v^i$ defined in \eqref{incident} are given by $u^s_{D\cup B}(x,d)+v^s_{D\cup B}(x,z), x\in \mathbb{R}^3\backslash (\overline{D}\cup \overline{B})$, and $u^{\infty}_{D\cup B}(\hat{x},d)+v^{\infty}_{D\cup B}(\hat{x},z)$, $ \hat{x}\in \mathbb{S}^2$, respectively.

Under the above configurations, now we formulate the phaseless inverse scattering problems as follows.

\begin{definition}[Phaseless inverse obstacle scattering with a reference ball]\label{prob:obstacle}
	Let $D$ and $B$ be, respectively, the impenetrable obstacle with boundary condition $\mathscr{B}$ and the sound-soft reference ball. Given the phaseless far field data   
	\begin{equation*}
	\begin{array}{ll}
	& \{|u^{\infty}_{D\cup B}(\hat{x}, d_0)|: \hat{x}\in \mathbb{S}^2\}, \\
	& \{|v^{\infty}_{D\cup B}(\hat{x}, z)|:  \hat{x}\in \mathbb{S}^2, z\in \Pi\}, \\
	& \{|u^{\infty}_{D\cup B}(\hat{x}, d_0)+v^{\infty}_{D\cup B}(\hat{x}, z)|: \hat{x}\in \mathbb{S}^2, z\in \Pi\}.
	\end{array}
	\end{equation*} 
	for a fixed wavenumber $k>0$ and a fixed incident direction $d_0\in \mathbb{S}^2$, determine the location and shape $\partial D$ as well as the boundary condition $\mathscr{B}$ for the obstacle.
\end{definition}

\begin{definition}[Phaseless inverse medium scattering with a reference ball]\label{prob:medium}
	Let $D$ and $B$ be, respectively, the inhomogeneous medium with refractive index $n_D$ and the reference medium ball. Given the phaseless far field data   
	\begin{equation*}
	\begin{array}{ll}
	& \{|u^{\infty}_{D\cup B}(\hat{x}, d_0)|: \hat{x}\in \mathbb{S}^2\}, \\
	& \{|v^{\infty}_{D\cup B}(\hat{x}, z)|:  \hat{x}\in \mathbb{S}^2, z\in \Pi\}, \\
	& \{|u^{\infty}_{D\cup B}(\hat{x}, d_0)+v^{\infty}_{D\cup B}(\hat{x}, z)|: \hat{x}\in \mathbb{S}^2, z\in \Pi\}.
	\end{array}
	\end{equation*} 
	for a fixed wavenumber $k>0$ and a fixed incident direction $d_0\in \mathbb{S}^2$, determine $n_D$.
\end{definition}

We refer to Figure \ref{fig:illustration} for an illustration of the geometry setup of Problems \ref{prob:obstacle} and \ref{prob:medium}. The uniqueness of these problems will be analyzed in the subsequent sections. 

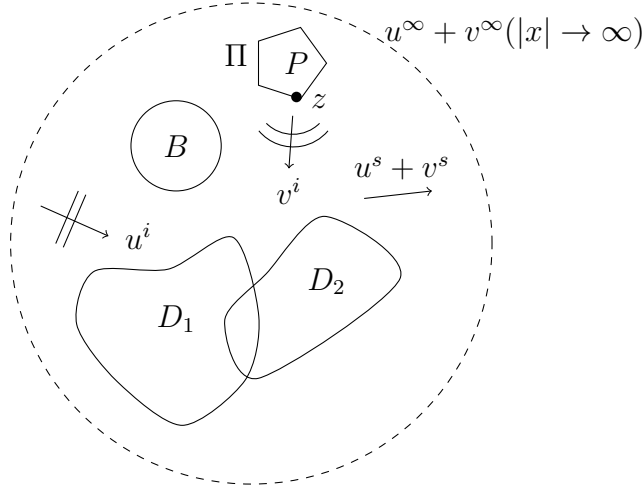
\begin{figure}
	\centering
	\newdimen\R 
	\R=0.5cm
	\begin{tikzpicture}
	\pgfmathsetseed{3}
	\draw plot [smooth cycle, samples=8, domain={1:8}] (\x*360/8+5*rnd:0.5cm+1cm*rnd) node at (0,0) {$D_1$};
	\pgfmathsetseed{9}
	\draw plot [smooth cycle, samples=5, domain={1:5}, xshift=2cm] (\x*360/8+5*rnd:0.5cm+1cm*rnd) node at (2,0.5) {$D_2$};
	\draw (0, 2.3) circle (0.6cm) node at (0,2.3) {$B$}; 
	\draw [->] (-1.8,1.5)--(-0.9,1.1) node at (-0.5,1.1) {$u^i$};
	\draw (-1.6, 1)--(-1.3, 1.7);
	\draw (-1.5, 0.95)--(-1.2, 1.65);
	\draw [->] (2.5,1.6)--(3.4, 1.7) node at (3,2) {$u^s+v^s$};
	\fill [black] (1.6,2.95) circle (2pt) node at (1.9,2.9) {$z$};  
	\draw (1.2, 2.6) arc(225:315:0.5cm);
	\draw (1.1, 2.5) arc(220:320:0.6cm);
	\draw [->] (1.55, 2.7)--(1.5, 2) node at (1.5, 1.7) {$v^i$};
	\draw [dashed] (1, 1) circle (3.2cm) node at (4.5, 3.8) {$u^\infty+v^\infty(|x|\to \infty)$};
	
	\draw[xshift=3\R, yshift=6.8\R] (0:\R) \foreach \x in {72,144,...,359} {-- (\x:\R)} -- cycle (90:\R); 
	\draw node at (1.6, 3.4) {$P$};
	\draw node at (0.8, 3.5) {$\Pi$};
	\end{tikzpicture}
	\caption{An illustration of the reference ball technique.} \label{fig:illustration}
\end{figure}

\section{Inverse obstacle scattering}\label{sec:obstacle}

We are now in a position to present the main result on uniqueness in phaseless inverse obstacle scattering. The following theorem shows that Problem \ref{prob:obstacle} admits a unique solution, i.e., the geometrical and physical information of the scatterer boundary can be simultaneously and uniquely determined from the modulus of far-field patterns.

\begin{theorem}\label{Thm1}
	Let $D_1$ and $D_2 $ be two obstacles with boundary conditions $\mathscr{B}_1$ and $\mathscr{B}_2$, respectively. Given a fixed $d_0 \in \mathbb{S}^2$, the scattered field and its far-field pattern with respect to the incident field $u^i(x, d_0)$ and $v^i(x, z)$ are denoted by $u^s_{D_j\cup B}(x,d_0)$, $u^\infty_{D_j\cup B}(\hat{x},d_0)$, $v^s_{D_j\cup B}(x,z)$ and $v^\infty_{D_j\cup B}(\hat{x},z)$, $j=1,2$, respectively. If the far-field patterns satisfy that
	\begin{align}
	|u^\infty_{D_1\cup B}(\hat{x},d_0)|= &|u^\infty_{D_2\cup B}(\hat{x},d_0)|, \quad \forall \hat{x}  \in \mathbb{S}^2, \label{obstacle_condition1} \\
	|v^\infty_{D_1\cup B}(\hat{x},z)|= &|v^\infty_{D_2\cup B}(\hat{x},z)|, \quad \forall (\hat{x}, z) \in \mathbb{S}^2\times \Pi \label{obstacle_condition2}
	\end{align}
	and
	\begin{equation}\label{obstacle_condition3}
	|u^{\infty}_{D_1\cup B}(\hat{x},d_0)+v^{\infty}_{D_1\cup B}(\hat{x},z)|=|u^{\infty}_{D_2\cup B}(\hat{x},d_0)+v^{\infty}_{D_2\cup B}(\hat{x},z)|
	\end{equation}
	for all $(\hat{x}, z) \in \mathbb{S}^2\times \Pi$, then $D_1=D_2$ and $\mathscr{B}_1=\mathscr{B}_2$.
\end{theorem}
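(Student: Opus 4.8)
My plan is to upgrade the three families of phaseless data to the \emph{phased} plane-wave far-field pattern of $D_j\cup B$ over all incident and observation directions, and then to invoke classical uniqueness for full-aperture data; the sound-soft reference ball is precisely the device that disposes of the leftover global phase. I would begin by exploiting the interference data: squaring \eqref{obstacle_condition3}, expanding $|a+b|^2=|a|^2+|b|^2+2\,\mathrm{Re}(a\bar b)$, and cancelling the moduli via \eqref{obstacle_condition1}--\eqref{obstacle_condition2} gives, for every fixed $\hat x\in\mathbb S^2$,
\[
\mathrm{Re}\!\left(u^\infty_{D_1\cup B}(\hat x,d_0)\,\overline{v^\infty_{D_1\cup B}(\hat x,z)}\right)=\mathrm{Re}\!\left(u^\infty_{D_2\cup B}(\hat x,d_0)\,\overline{v^\infty_{D_2\cup B}(\hat x,z)}\right),\qquad z\in\Pi.
\]
By the mixed reciprocity relation, $z\mapsto v^\infty_{D_j\cup B}(\hat x,z)$ equals, up to a fixed nonzero constant, the scattered field $u^s_{D_j\cup B}(z,-\hat x)$ at the point $z$ produced by the incident plane wave of direction $-\hat x$; it is therefore a real-analytic solution of the Helmholtz equation on $\mathbb R^3\setminus\overline{D_j\cup B}$, a set containing $\overline P$. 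Thus $\Psi_j(z):=\mathrm{Re}\big(u^\infty_{D_j\cup B}(\hat x,d_0)\,\overline{v^\infty_{D_j\cup B}(\hat x,z)}\big)$ is a real solution of $(\Delta+k^2)\Psi_j=0$ near $\overline P$ with $\Psi_1=\Psi_2$ on $\Pi=\partial P$; since $k^2$ is not a Dirichlet eigenvalue of $-\Delta$ in $P$, this forces $\Psi_1\equiv\Psi_2$ in $P$, and then, by unique continuation in the connected exterior $\Omega:=\mathbb R^3\setminus\overline{D_1\cup D_2\cup B}$ (placing $P$ disjoint from both obstacles, which is harmless), that $\Psi_1\equiv\Psi_2$ throughout $\Omega$.

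Next I would push the observation point to infinity. For fixed $\hat z\in\mathbb S^2$, inserting the far-field asymptotics of $u^s_{D_j\cup B}(\cdot,-\hat x)$ into $\Psi_1\equiv\Psi_2$ and using that $\mathrm{e}^{\mathrm{i}k|z|}$ runs over all unit-modulus numbers as $|z|\to\infty$ along the ray $\mathbb R_+\hat z$ yields the bilinear identity
\[
u^\infty_{D_1\cup B}(\hat x,d_0)\,\overline{u^\infty_{D_1\cup B}(\hat z,-\hat x)}=u^\infty_{D_2\cup B}(\hat x,d_0)\,\overline{u^\infty_{D_2\cup B}(\hat z,-\hat x)},\qquad\hat x,\hat z\in\mathbb S^2,
\]
where $u^\infty_{D_j\cup B}(\cdot,-\hat x)$ now denotes a plane-wave far-field pattern. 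Choosing $\hat x$ with $u^\infty_{D_1\cup B}(\hat x,d_0)\neq0$ (possible, since $D_1\cup B$ is a genuine scatterer) and dividing shows $u^\infty_{D_1\cup B}(\cdot,-\hat x)$ and $u^\infty_{D_2\cup B}(\cdot,-\hat x)$ are proportional with a unimodular factor (controlled by \eqref{obstacle_condition1}); a short argument combining this with the far-field reciprocity $u^\infty_D(\hat x,d)=u^\infty_D(-d,-\hat x)$ and letting $\hat x$ vary over $\mathbb S^2$ forces that factor to be independent of direction. Hence $u^\infty_{D_1\cup B}(\hat x,d)=\nu_0\,u^\infty_{D_2\cup B}(\hat x,d)$ for all $\hat x,d\in\mathbb S^2$ with a single constant $\nu_0$, $|\nu_0|=1$; in particular the usual conjugation ambiguity of phaseless problems is eliminated.

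Finally I would use the reference ball to pin down $\nu_0=1$. By Rellich's lemma the scattered fields satisfy $u^s_{D_1\cup B}(\cdot,d)=\nu_0\,u^s_{D_2\cup B}(\cdot,d)$ in $\Omega$, and hence, by continuity up to the sphere $\partial B$, on $\partial B$ as well. On $\partial B$ the sound-soft condition holds for \emph{both} scatterers, i.e.\ $\mathrm{e}^{\mathrm{i}k\,y\cdot d}+u^s_{D_j\cup B}(y,d)=0$; substituting $u^s_{D_2\cup B}(y,d)=-\mathrm{e}^{\mathrm{i}k\,y\cdot d}$ into the relation gives $(1-\nu_0)\,\mathrm{e}^{\mathrm{i}k\,y\cdot d}=0$ on $\partial B$, so $\nu_0=1$. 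Thus $u^\infty_{D_1\cup B}(\hat x,d)=u^\infty_{D_2\cup B}(\hat x,d)$ for all $\hat x,d\in\mathbb S^2$, and the classical uniqueness theorem with full-aperture far-field data — supplemented by the standard singular-source/blow-up argument at a hypothetical boundary point lying in the exclusive part of one obstacle (to accommodate the general mixed boundary condition) and by the recovery of $\mathscr B$ from the Cauchy data of the now-determined total field on the reconstructed surface — gives $D_1\cup B=D_2\cup B$ together with identical boundary conditions; excising the common ball $B$, which is disjoint from both $D_j$, yields $D_1=D_2$ and $\mathscr B_1=\mathscr B_2$.

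I expect the crux to be the second and third steps: converting the interference (real-part) data into a genuinely phased relation between far-field patterns, and, above all, killing the residual global phase $\nu_0$ — the single point where the presence of the \emph{known} sound-soft reference ball is truly indispensable. The closing classical-uniqueness step also requires some care for the general mixed boundary condition, but is by now standard.
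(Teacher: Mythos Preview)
Your proposal is correct and broadly parallel to the paper's argument, but the passage from the real-part identity to a \emph{phased} relation between far fields is organized quite differently, and your route is cleaner.

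The paper writes $u^\infty_{D_j\cup B}(\hat x,d_0)=r\,\mathrm e^{\mathrm i\alpha_j}$, $v^\infty_{D_j\cup B}(\hat x,z)=s\,\mathrm e^{\mathrm i\beta_j}$ and converts the real-part identity into $\cos(\alpha_1-\beta_1)=\cos(\alpha_2-\beta_2)$ on an open set where $r,s\neq0$. This splits into two cases: $\alpha_1-\alpha_2=\beta_1-\beta_2+2m\pi$ (yielding $v^\infty_1=\mathrm e^{\mathrm i\gamma(\hat x)}v^\infty_2$) and the ``conjugation'' case $\alpha_1+\alpha_2=\beta_1+\beta_2+2m\pi$ (yielding $u^s_1=\mathrm e^{\mathrm i\eta}\,\overline{u^s_2}$). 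The first case is pushed through the polyhedron $P$ via mixed reciprocity and the non-eigenvalue hypothesis exactly as you do; the second case is eliminated separately by observing that $u^s_1=\mathrm e^{\mathrm i\eta}\,\overline{u^s_2}$ forces $\mathrm e^{2\mathrm ik\rho}$ to have a limit as $\rho\to\infty$, a contradiction.

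You bypass this dichotomy entirely: you propagate the real-part identity from $\Pi$ into $P$ (via the Dirichlet non-eigenvalue assumption) and then to the whole common exterior (via analyticity), and only \emph{afterwards} send $z=\rho\hat z\to\infty$. Because $\overline{u^s_{D_j\cup B}(\rho\hat z,-\hat x)}\sim\rho^{-1}\mathrm e^{-\mathrm ik\rho}\,\overline{u^\infty_{D_j\cup B}(\hat z,-\hat x)}$, the density of $\{\mathrm e^{-\mathrm ik\rho}:\rho>0\}$ on the unit circle upgrades the real-part equality to the full complex identity $u^\infty_{D_1\cup B}(\hat x,d_0)\,\overline{u^\infty_{D_1\cup B}(\hat z,-\hat x)}=u^\infty_{D_2\cup B}(\hat x,d_0)\,\overline{u^\infty_{D_2\cup B}(\hat z,-\hat x)}$ in one stroke; the conjugation case never appears. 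From there your reduction to a single unimodular factor $\nu_0$, its elimination via the sound-soft condition on $\partial B$, and the final appeal to full-data uniqueness (Schiffer/Kirsch--Kress for the shape, Holmgren for the boundary condition) coincide with the paper's concluding steps. One small place to tighten in a write-up is the ``short argument'' that $\nu_0$ is direction-independent: it goes through by combining $u^\infty_1(\hat z,-\hat x)=\nu(\hat x)\,u^\infty_2(\hat z,-\hat x)$ with reciprocity $u^\infty(\hat z,-\hat x)=u^\infty(\hat x,-\hat z)$ to get $(\nu(\hat x)-\nu(\hat x'))\,u^\infty_2(\hat x,-\hat x')=0$, and then invoking analyticity and non-triviality of $u^\infty_2$.
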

\begin{proof}
	From  \eqref{obstacle_condition1}, \eqref{obstacle_condition2} and \eqref{obstacle_condition3}, we have for all $\hat{x}\in\mathbb{S}^2, z\in\Pi$
	\begin{equation}\label{Thm1equality1}
	\mathrm{Re}\left\{u^{\infty}_{D_1\cup B}(\hat{x},d_0) \overline{v^{\infty}_{D_1\cup B}(\hat{x},z)}\right\}
	=\mathrm{Re}\left\{u^{\infty}_{D_2\cup B}(\hat{x},d_0) \overline{v^{\infty}_{D_2\cup B}(\hat{x},z)}\right\}.
	\end{equation}
	where the overline denotes the complex conjugate. According to \eqref{obstacle_condition1} and \eqref{obstacle_condition2}, we denote
	\begin{equation*}
	u^{\infty}_{D_j\cup B}(\hat{x},d_0)=r(\hat{x},d_0) \mathrm{e}^{\mathrm{i} \alpha_j(\hat{x},d_0)},\quad
	v^{\infty}_{D_j\cup B}(\hat{x},z)=s(\hat{x},z) \mathrm{e}^{\mathrm{i} \beta_j(\hat{x},z)},\quad j=1,2,
	\end{equation*}
	where $r(\hat{x},d_0)=|u^{\infty}_{D_j\cup B}(\hat{x},d_0)|, s(\hat{x},z)=|v^{\infty}_{D_j\cup B}(\hat{x},z)|$,
	$\alpha_j(\hat{x},d_0)$ and $\beta_j(\hat{x},z)$, are real-valued functions,  $j=1,2$.
	
	By using $s(\hat{x},z)\not\equiv 0$ for $\hat{x} \in \mathbb{S}^2$ and $z \in \Pi_1$, and its continuity, we know that there exist open sets $S_1\subset  \mathbb{S}^2$ and $\Pi_1^0 \subset  \Pi_1$ such that $s(\hat{x},z)\neq 0$, $\forall (\hat{x},z)\in S_1\times \Pi_1^0$. From the analyticity of $v^{\infty}_{D_j\cup B}(\hat{x},z)$ with respect to $\hat{x} \in \mathbb{S}^2$, we see $s(\hat{x},z)\not\equiv 0$ for $\hat{x} \in S_1$ and $z \in \Pi_2$. Similarly, we have $s(\hat{x},z)\neq 0$, $\forall (\hat{x},z)\in S_2\times \Pi_2^0$, where $S_2\subset S_1$ and $\Pi_2^0 \subset  \Pi_2$. As an analogy, we obtain that $s(\hat{x},z)\neq 0$, $\forall (\hat{x},z)\in \tilde{S}\times \Pi^0$, where open sets $\tilde{S}\subset \mathbb{S}^2$, $\Pi^0=\cup_{\ell=1}^N\Pi_\ell^0$ and $\Pi_\ell^0 \subset  \Pi_\ell$($\ell=1,\cdots,N$). Since $u^{\infty}_{D_j\cup B}(\hat{x},d_0)$ is analytic with respect to $\hat{x} \in \mathbb{S}^2$,  $u^{\infty}_{D_j\cup B}(\hat{x},d_0)\not\equiv 0$ for $\hat{x} \in \mathbb{S}^2$, and $d_0$ is fixed,	we have $r(\hat{x},d_0)\not\equiv 0$ on $\tilde{S}_1$. Again, the continuity leads to $r(\hat{x},d_0)\neq 0$ on an open set $S \subset \tilde{S}$. Therefore, we have $r(\hat{x},d_0)\neq 0$, $s(\hat{x},z)\neq 0,\ \forall (\hat{x}, z) \in S\times \Pi^0$. This, together with \eqref{Thm1equality1}, implies  
	\begin{equation*}
	\cos[\alpha_1(\hat{x},d_0)-\beta_{1}(\hat{x},z)]=\cos[\alpha_2(\hat{x},d_0)-\beta_{2}(\hat{x},z)], \quad \forall (\hat{x}, z) \in S\times \Pi^0.
	\end{equation*}
	Hence,
	\begin{eqnarray}\label{Thm1equality2}
	\alpha_1(\hat{x},d_0)-\alpha_2(\hat{x},d_0)=
	\beta_{1}(\hat{x},z)-\beta_{2}(\hat{x},z)+ 2m\pi, \quad \forall (\hat{x}, z) \in S\times \Pi^0
	\end{eqnarray}
	or
	\begin{eqnarray}\label{Thm1equality3}
	\alpha_1(\hat{x},d_0)+\alpha_2(\hat{x},d_0)=
	\beta_{1}(\hat{x},z)+\beta_{2}(\hat{x},z)+ 2m\pi, \quad \forall (\hat{x}, z) \in S\times \Pi^0
	\end{eqnarray}
	with some $m \in \mathbb{Z}$.
	
	First, let us consider the case \eqref{Thm1equality2}. Since $d_0$ is fixed, we can define $\gamma(\hat{x}):=\alpha_1(\hat{x},d_0)-\alpha_2(\hat{x},d_0)- 2m\pi$
	for $ \hat{x} \in S$, and then, we have for all $ (\hat{x}, z) \in S\times \Pi^0$
	\begin{equation*}
	v^{\infty}_{D_1\cup B}(\hat{x},z)=s(\hat{x},z)\mathrm{e}^{\mathrm{i} \beta_{1}(\hat{x},z)}=s(\hat{x},z)\mathrm{e}^{\mathrm{i} \beta_{2}(\hat{x},z)+\mathrm{i} \gamma(\hat{x})}=\mathrm{e}^{\mathrm{i} \gamma(\hat{x})} v^{\infty}_{D_2\cup B}(\hat{x},z).
	\end{equation*}
	From the mixed reciprocity relation \cite[Theorem 3.16]{Colton}, we have
	\begin{equation*}
	4\pi v^{\infty}_{D_j\cup B}(\hat{x},z)= u^{s}_{D_j\cup B}(z,-\hat{x}),
	\end{equation*}
	and thus,
	\begin{equation*}
	u^{s}_{D_1\cup B}(z,-\hat{x})
	=\mathrm{e}^{\mathrm{i} \gamma(\hat{x})} u^{s}_{D_2\cup B}(z,-\hat{x}), \quad \forall (\hat{x}, z) \in S\times \Pi^0.
	\end{equation*}
	Further, for every $-d\in S$, it holds that $u^{s}_{D_1\cup B}(z,d)=\mathrm{e}^{\mathrm{i} \gamma(-d)} u^{s}_{D_2\cup B}(z,d)$ for $z\in \Pi_\ell^0$. By using the analyticity of
	$u^{s}_{D_j\cup B}(z,d)$($j=1,2$), we have $u^{s}_{D_1\cup B}(z,d)=\mathrm{e}^{\mathrm{i} \gamma(-d)} u^{s}_{D_2\cup B}(z,d)$ for $z\in \Pi_\ell$.
	Let $w(x,d)=u^{s}_{D_1\cup B}(x,d)-\mathrm{e}^{\mathrm{i} \gamma(-d)}  u^{s}_{D_2\cup B}(x,d)$, then
	\begin{equation*}
	\left\{
	\begin{array}{lr}
	\Delta w+ k^2 w=0 &\mathrm{in}\ P, \\
	w=0  &\mathrm{on}\ \partial P.
	\end{array}
	\right.
	\end{equation*}
	By the assumption of $P$ that $k^2$ is not a Dirichlet eigenvalue of $-\Delta$ in $P$,  we deduce $w=0$ in $P$. Now, the analyticity of
	$u^{s}_{D_j\cup B}(x,d)$($j=1,2$) with respect to $x$ leads to
	\begin{equation*}
	u^{s}_{D_1\cup B}(x,d)=\mathrm{e}^{\mathrm{i} \gamma(-d)}  u^s_{D_2\cup B}(x,d),
	\quad \forall x \in \mathbb{R}^3\backslash (D_1\cup D_2\cup B).
	\end{equation*}
	By using the sound-soft boundary condition $u^{s}_{D_j\cup B}(x,d)=-\mathrm{e}^{\mathrm{i} k x\cdot d}, x\in\partial B$ ($j=1,2$), we have
	\begin{equation*}
	-\mathrm{e}^{\mathrm{i} k x\cdot d}=-\mathrm{e}^{\mathrm{i} \gamma(-d)}  \mathrm{e}^{\mathrm{i} k x\cdot d}, \quad x \in \partial B.
	\end{equation*}
	Hence, $\mathrm{e}^{\mathrm{i} \gamma(-d)} \equiv1$, and
	\begin{equation*}
	u^{\infty}_{D_1\cup B}(\hat{x},d)= u^{\infty}_{D_2\cup B}(\hat{x},d),
	\quad \forall (\hat{x},-\tilde{d})\in \mathbb{S}^2\times S.
	\end{equation*}
	Further, the reciprocity relation and the analyticity of $u^{\infty}_{D_j\cup B}(-d,-\hat{x})$($j=1,2$) with respect to $d \in \mathbb{S}^2$ implies that the far field patterns coincide, i.e.,
	\begin{eqnarray}\label{coincide}
	u^{\infty}_{D_1\cup B}(\hat{x},d)= u^{\infty}_{D_2\cup B}(\hat{x},d),
	\quad \forall \hat{x}, d \in \mathbb{S}^2.
	\end{eqnarray}
	
	Next we are going to show that the case \eqref{Thm1equality3} does not hold. By using a similar argument, we have $u^{s}_{D_1\cup B}(x,d)=\mathrm{e}^{\mathrm{i} \eta(-d)} \overline{u^{s}_{D_2\cup B}(x,d)}$ for  $-d  \in S$ and $x \in \mathbb{R}^3\backslash (\overline{D_1}\cup\overline{D_2}\cup \overline{B})$. From the analyticity of $u^{\infty}_{D_1\cup B}(\hat{x},d)$  with respect to $\hat{x} \in \mathbb{S}^2$, $u^{\infty}_{D_1\cup B}(\hat{x},d)\not\equiv 0$ and its continuity, it follows that there exist open sets $\widetilde{U}_1\subset \mathbb{S}^2$, $\widetilde{U}_2\subset S$ such that $u^{\infty}_{D_1\cup B}(\hat{x},d)\neq 0$, $\forall (\hat{x},-d)\in \widetilde{U}_1\times \widetilde{U}_2$. Further, we consider $u^{\infty}_{D_2\cup B}(\hat{x},d)$ for $ (\hat{x},-d)\in \widetilde{U}_1\times \widetilde{U}_2$, and by the same way, it can be deduced that there exist open sets $U_1\subset \widetilde{U}_1$, $U_2\subset \widetilde{U}_2$ such that $u^{\infty}_{D_1\cup B}(\hat{x},d)\neq 0$ and $u^{\infty}_{D_2\cup B}(\hat{x},d)\neq 0$, $\forall (\hat{x},-d)\in U_1\times U_2$.
	By taking $\tilde{x}\in U_1$, $-\tilde{d}\in U_2$, $x=\rho \tilde{x}$, and using the definition of the far field pattern (see \cite[Theorem 2.6]{Colton}), we obtain
	\begin{equation*}
	\lim\limits_{\rho\rightarrow \infty} \rho\mathrm{e}^{-\mathrm{i} k\rho} u^{s}_{D_1\cup B}(\rho\tilde{x},\tilde{d})=u^{\infty}_{D_1\cup B}(\tilde{x},\tilde{d}) 
	\end{equation*}
	and
	\begin{equation*}
	\lim\limits_{\rho\rightarrow \infty} \rho\mathrm{e}^{\mathrm{i} k\rho} \overline{u^{s}_{D_2\cup B}(\rho\tilde{x},\tilde{d})}=\overline{u^{\infty}_{D_2\cup B}(\tilde{x},\tilde{d})}.
	\end{equation*}
	Further, noticing $u^{s}_{D_1\cup B}(\rho\tilde{x},\tilde{d})=\mathrm{e}^{\mathrm{i} \eta(-\tilde{d})} \overline{u^{s}_{D_2\cup B}(\rho\tilde{x},\tilde{d})}$ and $u^{\infty}_{D_j\cup B}(\tilde{x},\tilde{d})\neq 0$ ($j=1,2$),
	we have 
	\begin{equation*}
	\lim\limits_{\rho\rightarrow \infty}  \mathrm{e}^{2\mathrm{i} k\rho}  = \frac{\mathrm{e}^{\mathrm{i} \eta(-\tilde{d})} \overline{u^{\infty}_{D_2\cup B}(\tilde{x},\tilde{d})}}{u^{\infty}_{D_1\cup B}(\tilde{x},\tilde{d})},
	\end{equation*}
	which is a contradiction. Hence, the case \eqref{Thm1equality3} does not hold.
	
	Now, we go on considering \eqref{coincide}. By Theorem 5.6 in \cite{Colton}, we have $D_1=D_2=D$. Moreover, we claim that $\mathscr{B}_1=\mathscr{B}_2$. Otherwise, if $D=D_1=D_2$ and $\mathscr{B}_1\neq\mathscr{B}_2$, then from \eqref{BC}, we have that on a part of the boundary $\partial D$, denoted by $\Gamma$, the total field $u=u_1=u_2$ satisfies two different boundary conditions, that is,
	\begin{equation*}
	u=\frac{\partial u}{\partial \nu}+ \lambda u=0 \quad \mathrm{on}\ \Gamma
	\end{equation*}
	or
	\begin{equation*}
	\frac{\partial u}{\partial \nu}+ \lambda_1 u=\frac{\partial u}{\partial \nu}+ \lambda_2 u=0 \quad \mathrm{on}\ \Gamma
	\end{equation*}
	with $\lambda_1\neq \lambda_2$. For the two cases, it is readily to see that $u=\partial u/\partial \nu=0$ on $\Gamma$. By Holmgren's theorem \cite[Theorem 2.3]{Colton} and the boundary condition we obtain that $u=0$ in $\mathbb{R}^3\backslash (D\cup B)$. This leads to the contradiction that the incident field must satisfy the radiation condition. Hence, $\mathscr{B}_1=\mathscr{B}_2$.
\end{proof}

\begin{remark}
	The impossibility of \eqref{Thm1equality3} in the above proof could be justified alternatively. For example, by using a similar argument, we have
	$u^\infty_{D_1\cup B}(\hat{x},d)=\mathrm{e}^{\mathrm{i} \eta} \overline{u^\infty_{D_2\cup B}(\hat{x},d)}$
	for all $\hat{x}, d  \in \mathbb{S}^2$ with a real constant $\eta$. And thus
	$u^s_{D_1\cup B}(x,d)=\mathrm{e}^{\mathrm{i} \eta}\overline{u^s_{D_2\cup B}(x,d)}$ for all
	$x \in \mathbb{R}^3\backslash (D_1\cup D_2\cup B),\ d\in \mathbb{S}^2$.
	Substituting this into the sound-soft boundary condition on $\partial B$, we see $
	-\mathrm{e}^{\mathrm{i} k x\cdot d}=-\mathrm{e}^{\mathrm{i} \eta} \mathrm{e}^{-\mathrm{i} k x\cdot d}, \ x \in \partial B$, which is a contradiction. Therefore, the case \eqref{Thm1equality3} does not hold.
\end{remark}

\begin{remark}
	For inverse obstacle scattering problem in two dimensions, justifications of the uniqueness in Theorem \ref{Thm1} can be carried over in a similar way. In other words, by appropriate modifications of the fundamental solution and the radiation condition, an analogous assertion of uniqueness can be established in two dimensions.
\end{remark}

\begin{remark}
	We would like to point out that the a similar result on uniqueness can also be obtained by using the superposition of a fixed point source and some point sources as the incident fields on the scattering system with the reference ball.
\end{remark}

\section{Inverse medium scattering}\label{sec:medium}

Now we present the uniqueness results concerning phaseless inverse medium scattering.

\begin{theorem}\label{Thm2}
	Let $n_{D_1}$ and $n_{D_2}$ be the refractive index of medium $D_1$ and $D_2$, respectively. Given a fixed $d_0 \in \mathbb{S}^2$, the scattered field and its far-field pattern with respect to the incident field $u^i(x, d_0)$ and $v^i(x, z)$ are denoted by $u^s_{D_j\cup B}(x,d_0)$, $u^\infty_{D_j\cup B}(\hat{x},d_0)$,
	$v^s_{D_j\cup B}(x,z)$ and $v^\infty_{D_j\cup B}(\hat{x},z)$, $j=1,2$, respectively. If the far-field patterns satisfy that
	\begin{align}
	|u^{\infty}_{D_1\cup B}(\hat{x},d_0)|= &|u^{\infty}_{D_2\cup B}(\hat{x},d_0)|, \quad \forall \hat{x}  \in \mathbb{S}^2, \label{condition1} \\
	|v^{\infty}_{D_1\cup B}(\hat{x},z)|=&|v^{\infty}_{D_2\cup B}(\hat{x},z)|, \quad \forall (\hat{x},z) \in \mathbb{S}^2\times \Pi \label{condition2}
	\end{align}
	and
	\begin{eqnarray}\label{condition3}
	|u^{\infty}_{D_1\cup B}(\hat{x},d_0)+v^{\infty}_{D_1\cup B}(\hat{x},z)|=|u^{\infty}_{D_2\cup B}(\hat{x},d_0)+v^{\infty}_{D_2\cup B}(\hat{x},z)|
	\end{eqnarray}
	for all $(\hat{x}, z) \in \mathbb{S}^2\times \Pi$, then $n_{D_1}=n_{D_2}$.
\end{theorem}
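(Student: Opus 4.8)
The plan is to follow the architecture of the proof of Theorem~\ref{Thm1} as closely as possible, making only one essential change. The sound-soft rigidity on $\partial B$ that is used there to eliminate the spurious phase factor is no longer available for a penetrable reference ball, so it will be replaced by an interior transmission eigenvalue argument for $B$---and this is precisely where the smallness condition \eqref{assume} enters. Once the phase factor has been removed, the problem reduces to the classical uniqueness theorem for the inverse medium scattering problem with full (phased) far-field data at a fixed frequency. I expect the removal of the phase factor to be the main obstacle; the remaining steps are essentially a transcription of the obstacle case.

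In detail, squaring the moduli in \eqref{condition1}--\eqref{condition3} and subtracting yields, exactly as in the derivation of \eqref{Thm1equality1}, the identity $\mathrm{Re}\{u^{\infty}_{D_1\cup B}(\hat x,d_0)\,\overline{v^{\infty}_{D_1\cup B}(\hat x,z)}\}=\mathrm{Re}\{u^{\infty}_{D_2\cup B}(\hat x,d_0)\,\overline{v^{\infty}_{D_2\cup B}(\hat x,z)}\}$ for all $(\hat x,z)\in\mathbb{S}^2\times\Pi$. Passing to polar form with common moduli (legitimate by \eqref{condition1}--\eqref{condition2}) and exploiting analyticity in $\hat x$ together with the non-vanishing of the far-field patterns, one produces an open set $S\times\Pi^0\subset\mathbb{S}^2\times\Pi$ on which both moduli are nonzero, whence $\cos(\alpha_1-\beta_1)=\cos(\alpha_2-\beta_2)$ there, and therefore either $\alpha_1-\alpha_2=\beta_1-\beta_2+2m\pi$ or $\alpha_1+\alpha_2=\beta_1+\beta_2+2m\pi$ on $S\times\Pi^0$ for some $m\in\mathbb{Z}$. (The non-vanishing needed here is itself a consequence of \eqref{assume}: if some $v^{\infty}_{D_j\cup B}(\cdot,z)$ or $u^{\infty}_{D_j\cup B}(\cdot,d_0)$ vanished identically, then by Rellich's lemma the corresponding total field would coincide with the incident field near $\overline{B}$, and the transmission conditions on $\partial B$ would produce a nontrivial interior transmission eigenpair for $B$.) In the first case, with $\gamma(\hat x):=\alpha_1(\hat x,d_0)-\alpha_2(\hat x,d_0)-2m\pi$ one has $v^{\infty}_{D_1\cup B}(\hat x,z)=\mathrm{e}^{\mathrm{i}\gamma(\hat x)}v^{\infty}_{D_2\cup B}(\hat x,z)$ on $S\times\Pi^0$, and then---via the mixed reciprocity relation $4\pi v^{\infty}_{D_j\cup B}(\hat x,z)=u^{s}_{D_j\cup B}(z,-\hat x)$, analyticity in $z$, and the fact that $k^2$ is not a Dirichlet eigenvalue of $-\Delta$ in $P$---one obtains, exactly as in Theorem~\ref{Thm1}, that $u^{s}_{D_1\cup B}(\cdot,d)=\mathrm{e}^{\mathrm{i}\gamma(-d)}u^{s}_{D_2\cup B}(\cdot,d)$ on $\mathbb{R}^3\setminus(\overline{D_1}\cup\overline{D_2}\cup\overline{B})$ for every $-d\in S$; equivalently, $u_{D_1\cup B}(\cdot,d)-\mathrm{e}^{\mathrm{i}\gamma(-d)}u_{D_2\cup B}(\cdot,d)=(1-\mathrm{e}^{\mathrm{i}\gamma(-d)})u^{i}(\cdot,d)$ there.

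The key step is to show $\mathrm{e}^{\mathrm{i}\gamma(-d)}=1$. Fix an admissible $d$, write $c:=\mathrm{e}^{\mathrm{i}\gamma(-d)}$, and suppose $c\neq1$. Restricting the previous identity to a neighbourhood of $\overline{B}$ (which does not meet $\overline{D_1}\cup\overline{D_2}$) and using the transmission conditions for $u_{D_j\cup B}$ across $\partial B$, the interior field $v:=(u_{D_1\cup B}-c\,u_{D_2\cup B})|_{B}$ solves $\Delta v+k^2 n_0^2 v=0$ in $B$, the restricted plane wave $w:=(1-c)\,u^{i}|_{B}$ solves $\Delta w+k^2 w=0$ in $B$, and $v$ and $w$ share Cauchy data on $\partial B$, so $v-w\in H^2_0(B)$. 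Since $c\neq1$ forces $w\not\equiv0$, the pair $(v,w)$ is a nontrivial solution of the homogeneous interior transmission problem for $B$, i.e.\ $k^2$ is an interior transmission eigenvalue of the ball $B$ with constant index $n_0^2$; but \eqref{assume} is chosen precisely so that no such eigenvalue exists---a contradiction. Hence $c=1$, so $u^{s}_{D_1\cup B}(\cdot,d)=u^{s}_{D_2\cup B}(\cdot,d)$ in the exterior for every $-d\in S$, whence $u^{\infty}_{D_1\cup B}(\hat x,d)=u^{\infty}_{D_2\cup B}(\hat x,d)$ for all $\hat x\in\mathbb{S}^2$ and $-d\in S$, and the analytic dependence on the incident direction (and reciprocity, as in Theorem~\ref{Thm1}) extends this to all $\hat x,d\in\mathbb{S}^2$.

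It remains to discard the second case and to conclude. The second case is excluded just as \eqref{Thm1equality3} was in Theorem~\ref{Thm1}: one reaches $u^{s}_{D_1\cup B}(\rho\tilde x,\tilde d)=\mathrm{e}^{\mathrm{i}\eta(-\tilde d)}\overline{u^{s}_{D_2\cup B}(\rho\tilde x,\tilde d)}$ on suitable open sets of directions, and comparing the two far-field asymptotics forces $\lim_{\rho\to\infty}\mathrm{e}^{2\mathrm{i}k\rho}$ to exist, which is absurd. Therefore the far-field patterns of $D_1\cup B$ and $D_2\cup B$ coincide for all $\hat x,d\in\mathbb{S}^2$ at the single wavenumber $k$. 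By the classical uniqueness theorem for the inverse medium problem with full far-field data at a fixed frequency (see, e.g., \cite{Colton}), the refractive indices of the two configurations coincide; since both equal $n_0^2$ on $B$ and $1$ outside $D_1\cup D_2\cup B$, this yields $n_{D_1}=n_{D_2}$ (and in particular $D_1=D_2$), which completes the proof.
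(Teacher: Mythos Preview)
Your proof is correct and matches the paper's architecture everywhere except at the crucial step of forcing $\mathrm{e}^{\mathrm{i}\gamma(-d)}=1$. There you invoke interior transmission eigenvalues: if the factor were not $1$, the pair $(v,w)$ you construct would witness $k^2$ as a transmission eigenvalue of $(B,n_0^2)$, which you claim \eqref{assume} forbids. The paper instead stays elementary: with $\psi:=u_{D_1\cup B}-\mathrm{e}^{\mathrm{i}\gamma(-d)}u_{D_2\cup B}$ in $B$ (which solves $\Delta\psi+k^2n_0^2\psi=0$ and has Cauchy data $(1-\mathrm{e}^{\mathrm{i}\gamma(-d)})\,u^i$ on $\partial B$), Green's second identity against any $\phi$ with $\Delta\phi+k^2n_0^2\phi=0$ gives $(1-\mathrm{e}^{\mathrm{i}\gamma(-d)})(n_0^2-1)\int_B\phi\,u^i\,\mathrm{d}x=0$; choosing the explicit test function $\phi(x)=\mathrm{e}^{\mathrm{i}kn_0\,x\cdot d}$ reduces this to $(1-\mathrm{e}^{\mathrm{i}\gamma(-d)})\int_B\mathrm{e}^{\mathrm{i}k(n_0+1)x\cdot d}\,\mathrm{d}x=0$, and \eqref{assume} is used only to force $|k(n_0+1)x\cdot d|<\pi/2$ on $B$, so the real part of the integrand is strictly positive and the integral cannot vanish. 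Your route is more conceptual and has the bonus of simultaneously justifying the non-vanishing of the far-field patterns (a point the paper passes over when it writes ``following the same arguments''), but the assertion that \eqref{assume} was ``chosen precisely'' to exclude transmission eigenvalues is not how the paper uses it, and you would need to supply a citation for an ITE-free lower bound (for instance $k_1\ge\pi/(Rn_0)$ when $n_0>1$, which \eqref{assume} comfortably implies). The paper's computation is self-contained and in effect proves directly that a plane wave cannot be the Helmholtz component of a transmission eigenpair for $B$ under \eqref{assume}, without naming the ITE framework.
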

\begin{proof}
	Using the mixed reciprocity relation for inhomogeneous medium \cite[Theorem 2.2.4]{Pot01} and following the same arguments in the proof of Theorem \ref{Thm1}, we have
	\begin{equation}\label{case1}
	u^s_{D_1\cup B}(x,d)=\mathrm{e}^{\mathrm{i} \gamma(-d)} u^s_{D_2\cup B}(x,d),
	\quad \forall x \in \mathbb{R}^3\backslash (D_1\cup D_2\cup B),\ -d  \in S,
	\end{equation}
	and for $x\in \partial B, -d  \in   S$,
	\begin{equation*}
	u^{s}_{D_1\cup B}(x,d)=\mathrm{e}^{\mathrm{i} \gamma(-d)}  u^{s}_{D_2\cup B}(x,d),\quad
	\frac{\partial u^s_{D_1\cup B}(x,d)}{\partial \nu}=\mathrm{e}^{\mathrm{i} \gamma(-d)}\frac{\partial u^s_{D_2\cup B}(x,d)}{\partial\nu}.
	\end{equation*}
	From the fact that the total field $u_{D_j\cup B}(x,d)=u^i(x,d)+u^s_{D_j\cup B}(x,d)$ for $j=1,2$ satisfies
	$\Delta u_{D_j\cup B}+ k^2n_0^2 u_{D_j\cup B}=0$ in $B$, we obtain that $\psi:=u_{D_1\cup B}(x,d)-\mathrm{e}^{\mathrm{i} \gamma(-d)} u_{D_2\cup B}(x,d)$ satisfies
	\begin{equation*}
	\Delta \psi+ k^2n_0^2 \psi=0 \quad \mathrm{in} \ B.
	\end{equation*}
	Now, for any $\phi$ satisfying the Helmholtz equation $\Delta \phi+ k^2n_0^2 \phi=0 $ in $B$,
	by using Green's second theorem for $\psi$ and $\phi$, we have
	\begin{equation*}
	\int_{\partial B}\left(\phi\frac{\partial\psi}{\partial\nu}-\psi\frac{\partial\phi}{\partial\nu}\right) \mathrm{d} s=\int_{B}(\phi\Delta \psi- \psi\Delta \phi)\,\mathrm{d} x=0.
	\end{equation*}
	Further, from $\psi=(1-\mathrm{e}^{\mathrm{i} \gamma(-d)} )u^i$ and $\frac{\partial\psi}{\partial\nu}=(1-\mathrm{e}^{\mathrm{i} \gamma(-d)} )\frac{\partial u^i}{\partial\nu}$ on $\partial B$, it follows that
	\begin{equation*}
	(1-\mathrm{e}^{\mathrm{i} \gamma(-d)} )\int_{\partial B}\left(\phi\frac{\partial u^i}{\partial\nu}-u^i\frac{\partial\phi}{\partial\nu}\right)\mathrm{d} s=0.
	\end{equation*}
	Again, Green's second theorem yields
	\begin{eqnarray}
	0=(1-\mathrm{e}^{\mathrm{i} \gamma(-d)} )\int_{\partial B}\left(\phi\frac{\partial u^i}{\partial\nu}-u^i\frac{\partial\phi}{\partial\nu}\right)\mathrm{d} s \nonumber\\
	\ \ = (1-\mathrm{e}^{\mathrm{i} \gamma(-d)} )\int_{B}(\phi\Delta u^i- u^i\Delta \phi)\,\mathrm{d} x \nonumber\\
	\ \ = (1-\mathrm{e}^{\mathrm{i} \gamma(-d)} ) k^2(n_0^2-1)\int_{B}\phi u^i\,\mathrm{d} x \nonumber\\
	\ \ = (1-\mathrm{e}^{\mathrm{i} \gamma(-d)} ) k^2(n_0^2-1)\int_B \phi(x)\mathrm{e}^{\mathrm{i} k x\cdot d}\,\mathrm{d} x, \quad \forall \ -d \in S.\nonumber
	\end{eqnarray}
	
	In the following, without loss of generality, let $x_0=(0,0,0)$ be the origin. By using $n_0\neq1$ and taking $\phi(x)=\mathrm{e}^{\mathrm{i} kn_0 x\cdot d }$,  we have
	\begin{equation}\label{relation1}
	(1-\mathrm{e}^{\mathrm{i} \gamma(-d)} ) \int_B \mathrm{e}^{\mathrm{i} k(n_0+1)  x\cdot d}\,\mathrm{d} x =0.
	\end{equation}
	It follows from \eqref{assume} that $|k(n_0+1) x\cdot d|\leq k(n_0+1) |x|< \pi/2$ for $|x|<R$, and thus, $\cos(k(n_0+1)x\cdot d)>0$ for $|x|<R$,
	which means
	\begin{equation*}
	\mathrm{Re}\int_B  \mathrm{e}^{\mathrm{i} k(n_0+1) x\cdot d}\,\mathrm{d} x
	=\int_B  \cos(k(n_0+1)x\cdot d)\,\mathrm{d} x>0.
	\end{equation*}
	This, together with \eqref{relation1}, leads to $\mathrm{e}^{\mathrm{i} \gamma(-d)} =1$. Hence, by using again the reciprocity relation and the analyticity of $u^{\infty}_{D_j\cup B}(-d,-\hat{x})$($j=1,2$) with respect to $d \in \mathbb{S}^2$, it follows that the far field patterns coincide, and in terms of \cite[Theorem 10.5]{Colton}, we obtain
	$n_{D_1}=n_{D_2}$. The proof is complete.
\end{proof}

\section{Conclusion}\label{sec:conclusion}

In this paper, we established some uniqueness results on inverse scattering problems for the Helmholtz equation with phaseless far-field data. The crux of our proof is the combination of the reference ball technique and the superposition of different incidence waves, which leads to the uniqueness on the general cases of obstacle scattering problems. We proved that the obstacle with its boundary condition can be uniquely determined by the modulus of far-field patterns. In fact, the reference ball and superposition of incident waves play the crucial role of calibrating the scattering system such that the translation invariance does not occur.    

Based on the ideas in this paper, our future work consist in the uniqueness results of the phaseless inverse elastic scattering problems, as well as the development of effective inversion algorithms with phaseless data.

\section*{Acknowledgements}

The first author was supported by NSFC grant 11671170 and the second author was supported by NSFC grants 11601107, 41474102 and 11671111.


\end{document}